
\documentclass[a4paper, 11pt]{amsart}
\usepackage[utf8]{inputenc}
\usepackage[T1, T2A]{fontenc}
\usepackage{amsmath}
\usepackage{amsthm}
\usepackage{amssymb}
\usepackage[american]{babel}
\usepackage{lineno}
\usepackage{amscd}
\addtolength{\textwidth}{4cm}
\addtolength{\oddsidemargin}{-2cm}
\addtolength{\evensidemargin}{-2cm}
\usepackage{subcaption}
\usepackage{wrapfig}
\usepackage{colortbl}
\usepackage[usenames, dvipsnames]{xcolor}
\usepackage{tikz}
\usepackage{tabularx}
\modulolinenumbers[5]

\usepackage[linktocpage=true, linktoc=all,hidelinks,hypertexnames=true]{hyperref}
\hypersetup{pdfstartpage=1, pdfstartview={FitH}}

\setcounter{tocdepth}{2}
\setcounter{secnumdepth}{6}

\theoremstyle{definition} 

\newtheorem{thm}{Theorem}[section]      
\newtheorem{definition}[thm]{Definition}
\newtheorem{prop}[thm]{Proposition}
\newtheorem{lemma}[thm]{Lemma}
\newtheorem{corollary}[thm]{Corollary}

\newtheorem{remark}[thm]{Remark}
\newtheorem{example}[thm]{Example}

\newcommand{\NN}{{\mathbb N}}

\newcommand{\CC}{{\mathbb C}}
\newcommand{\RR}{{\mathbb R}}
\newcommand{\QQ}{{\mathbb Q}}










\begin{document}


\title{Legendrian curves in \texorpdfstring{$\CC P^3$}{CP 3}: cubics and curves on a quadric surface}

\author[N. Kalinin]{Nikita Kalinin}\thanks{National Research University Higher School of Economics, Soyuza Pechatnikov str., 16, St. Petersburg, Russian Federation. Support from the Basic Research Program of the National Research University Higher School of Economics is gratefully acknowledged. Supported in part by Young Russian Mathematics award.}




\maketitle

\begin{abstract}  We prove that the number of legendrian rational cubics in $\CC P^3$ through three generic points and a line is three; also we classify all legendrian curves on a quadric surface. Several computations are additionally verified using Macaulay2 computer algebra system.
\end{abstract}

\section{Introduction}
Inspired by Gromov-Witten invariants, one can try to count
holomorphic curves under some additional restrictions. E.g., I. Vainsencher asked to count legendrian curves passing through prescribed number of generic points or lines. His student, \'Eden Amorim \cite{contato} used localizations to count rational legendrian curves through $2d+1$ generic lines in $\CC P^3$.  Then G. Mikhalkin proposed to me this problem as a potential topic for my thesis, however not much has been accomplished. In this paper, we show that the family of legendrian cubics passing through three generic point in $\CC P^3$ forms a line in the space of coefficients and classify all algebraic legendrian curves on a quadric surface. Some computations are performed in Macaulay2, \cite{M2}.


The recent study of the complex legendrian curves 
is
motivated by minimal surfaces in four dimensional sphere. 
The map
$(z_1,z_2,z_3,z_4)\to(z_1+jz_2,z_3+jz_4)$ from $\CC^4$ to $\mathbb H^2$
yields so-called twistor (or Penrose) map $\phi:\CC P^3\to\mathbb H P^1=S^4$, and Bryant has
shown \cite{bryant} that the images  of the legendrian
curves in $\CC P^3$ under $\phi$ are superminimal surfaces in
$S^4$. Furthermore, each minimal immersion $S^2\to S^4$ can be
obtained as $\phi(C)$ where $C$ is a rational legendrian curve in $\CC
P^3$.  Then, each Riemann surface $M$ can be mapped to a legendrian curve in $\CC P^3$,
using two meromorphic functions $(f,g)$ on $M$. This leads to the fact
that for each Riemann surface $M^2$ there exists a conformal minimal
immersion $M^2\to S^4$, and such immersions are nowadays constructed mostly by this
approach. See a recent survey \cite{alarcon2017new} about the minimal surfaces. 

The area of the image of a harmonic map $f:S^2\to S^4$ is equal to $4\pi d$ if
$f(S^2)$ comes as the projection a legendrian
rational curve in $\CC P^3$ of degree $d$. The dimension of the space
$\mathfrak M_{d,0}$ of legendrian maps $\CC P^1\to \CC P^3$ of
degree $d$ is proven to be  $2d+4$, see \cite{harmonic,0686.58009,verdier1984applications,verdier1988applications}; see
\cite{0914.58005} for the legendrian maps $\CC P^1\to\CC
P^{2n+1}$. This is done via studying the pairs $(f,g)$ of
meromorphic functions 
of degree $d$ with the same ramification divisor.
Up to degree six the space $\mathfrak M_{d,0}$ is a smooth complex manifold, see \cite{1255.58004}.

If $d\geq
g+3$, then the part of the space $\mathfrak M_{d,g}$, which consists of smooth contact curves in $\CC P^3$ of degree $d$ and
genus $g$, is smooth, \cite{complexContact,MR1608659}. Besides, a complete
intersection cannot be a contact curve \cite{buczynski2006legendrian}. That complicates the study of
the contact curves of higher genus, which was approached in \cite{0997.53043, 0902.53043}. The
dimension of $\mathfrak M_{d,g}$ is $2d-g+4$
for $d\geq \max(2g,g+2)$, \cite{0943.58007}; 
the dimension of each irreducible
component of $\mathfrak M_{d,g}$ is between $2d-4g+4$ and $2d-g+4$,
where upper bound is always attained by the totally geodesic
immersions (whose image belongs to a line) and the lower bound is obtained
on $\mathfrak M_{6,1}$ and $\mathfrak M_{8g+1+3k,g}$,\cite{0997.53043}. See
\cite{0902.53043}, for further details about other possible pairs
$(d,g)$ with non-trivial contact curve. All this means that for $g\geq 1$ we need to take
the degree $d$ of the curve at least $6$ what is now beyond our abilities
to compute with formulae even using computer.


For a general overview of complex contact varieties and deformations
of contact curves see \cite{buczinsky}, \cite{complexContact, 0801.14014}. Real algebraic contact
structures are numerous, the questions about polynomial distributions
went back to \cite{MR0016748,MR0217061}, see Example~\ref{ex_realform}.

For the works of the same spirit we mention the study of legendrian
curves of minimal degree through two points with prescribed tangency
\cite{giluch2005real} and contact curves in $PSL(2,\CC)$ \cite{contactsl2}.


\section{The contact structure on \texorpdfstring{$\CC P^3$}{CP^3}}

\begin{definition}
A section $\omega$ of the projectivization $P(\Omega^1(\CC P^3))$ of the cotangent bundle of $\CC P^3$ is said to be a {\it contact holomorphic form} on $\CC P^3$ if $\omega\wedge d\omega$ is nowhere zero. 

Formally, there are charts $A_i$, holomorphic $1$-forms $\omega_i$ on $A_i$, a set $f_{ij}$ of transition functions on $A_i\cap A_j$, $f_{ij}\omega_i=\omega_j$, such that  $\bigcup A_i=\CC P^3$ and  $\omega_i\wedge d\omega_i\ne 0$ on $A_i$. 

\end{definition}
Note that if $\omega$ is locally a contact form, $f$ is a function, then $f\omega$ is also a contact form since
\begin{equation}
\label{eq_trans}
f\omega\wedge d(f\omega) = f^2\omega\wedge d\omega.
\end{equation}

\begin{example}
The form $\omega =ydx-xdy+wdz-zdw$ is contact. 
\end{example}
Indeed, consider the restriction of $\omega$ to the chart $w=1$. We have $$\omega|_{w=1}=dz+ydx-xdy,$$
 $$\omega|_{w=1}\wedge d\omega|_{w=1} = -2dx\wedge dy\wedge dz\ne 0,$$ similar formulae hold in other charts.

\begin{thm}[\cite{kobayashi1959remarks}]
Each contact holomorphic form $\omega$ on $\CC P^3$ is of the type
\begin{equation}
(py-qz+aw)dx+(-px+rz+bw)dy+(qx-ry+cw)dz+(-ax-by-cz)dw
\label{eq_contactform}
\end{equation}
 where
  $a,b,c,p,q,r$ are constants and $pc+qb+ra\ne 0$. Furthermore, all
  such forms are equivalent under the $GL(4,\CC)$ action.
\end{thm}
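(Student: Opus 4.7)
The plan is to reduce everything to linear algebra on $\Lambda^{2}V^{*}$, where $V=\CC^{4}$, via the Euler sequence. First I would determine what global object $\omega$ really is: locally $\omega$ is a $1$-form, well-defined up to multiplication by a nonvanishing function, so globally it is a section of $\Omega^{1}_{\CC P^{3}}\otimes L$ for some line bundle $L=\mathcal{O}(k)$. Under a rescaling $\omega\mapsto f\omega$ one computes $\omega\wedge d\omega \mapsto f^{2}\,\omega\wedge d\omega$, so $\omega\wedge d\omega$ is a section of $K_{\CC P^{3}}\otimes L^{\otimes 2}=\mathcal{O}(2k-4)$. For this to be nowhere zero we must have $k=2$, and hence the contact forms live in $H^{0}(\CC P^{3},\Omega^{1}(2))$.

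Next I would parametrize this space using the twisted Euler sequence
$$0\to \Omega^{1}(2)\to \mathcal{O}(1)^{\oplus 4}\to \mathcal{O}(2)\to 0,$$
which identifies $H^{0}(\Omega^{1}(2))$ with tuples of linear forms $(f_{1},f_{2},f_{3},f_{4})$ satisfying $\sum x_{i}f_{i}=0$. Writing $f_{i}=\sum_{j}a_{ij}x_{j}$, this constraint is exactly $a_{ij}+a_{ji}=0$, so $H^{0}(\Omega^{1}(2))\cong \Lambda^{2}V^{*}$ is six-dimensional, indexed by the entries $a_{ij}$ with $i<j$. Renaming these six entries suitably as $p,q,r,a,b,c$, one recovers formula \eqref{eq_contactform} on the nose.

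The contact condition then reduces to a Pfaffian. In the affine chart $w=1$ I would just expand $\omega\wedge d\omega$ directly: the terms that are linear in $x,y,z$ cancel pairwise, leaving a constant multiple of $\mathrm{pf}(A)=pc+qb+ra$ times $dx\wedge dy\wedge dz$. Since a $4\times 4$ antisymmetric matrix is nondegenerate iff its Pfaffian is nonzero, this gives exactly the stated inequality. For the $GL(4,\CC)$-equivalence, note that $GL(4,\CC)$ acts on $H^{0}(\Omega^{1}(2))\cong \Lambda^{2}V^{*}$ by $A\mapsto g^{\top}A g$, and by the classical normal-form theorem every nondegenerate antisymmetric bilinear form on $\CC^{4}$ is equivalent to the standard symplectic form, so all contact forms lie in a single orbit.

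The one step requiring real care is the initial cohomological input: checking that a section of $P(\Omega^{1}(\CC P^{3}))$ honestly globalizes to a section of $\Omega^{1}(2)$, and that the dimension count for $H^{0}(\Omega^{1}(2))$ is indeed six. Once the bookkeeping with the Euler sequence is in place, the remainder is just the Pfaffian calculation plus the orbit theorem for skew forms.
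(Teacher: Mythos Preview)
Your argument is correct and shares the paper's key first step: both of you use that $\omega\wedge d\omega$ is a nowhere-vanishing section of $K_{\CC P^3}\otimes L^{\otimes 2}$ to pin down $L=\mathcal{O}(2)$ (the paper phrases this as $c_1(\CC P^3)=2c_1(\alpha)$). Where you diverge is in the bookkeeping that follows. The paper works in the affine chart $w=1$, argues that the coefficients $P,Q,R$ of $Pdx+Qdy+Rdz$ must be linear for the form to extend with pole order at most two, and then relies on a direct computation to isolate the six-parameter family and the nondegeneracy condition. You instead invoke the twisted Euler sequence to identify $H^0(\Omega^1(2))\cong\Lambda^2 V^*$ intrinsically, which immediately yields the antisymmetric coefficient matrix and turns the contact condition into the Pfaffian $pc+qb+ra\ne 0$; the $GL(4,\CC)$-equivalence then falls out of the standard normal form for nondegenerate skew forms on $\CC^4$. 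Your route is cleaner and explains \emph{why} the answer looks like a $2$-form on $V$, whereas the paper's chart-by-chart approach is more hands-on but leaves the structure (and the equivalence statement) to unexplained computation. The only point to tidy up in your write-up is the passage from ``section of $P(\Omega^1)$'' to ``honest global section of $\Omega^1(2)$'': you should note that the contact condition forces the local representatives to be nowhere zero, so the transition functions are genuinely nonvanishing scalars and assemble into a line bundle, after which the degree argument applies.
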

\begin{proof}We only sketch a proof from \cite{kobayashi1959remarks}. Let $\alpha$ be a holomorphic contact form
  in $\CC P^3$. Note that the set $f_{ij}$ of transition functions defines a linear bundle whose first Chern class we denote by $c_1(\alpha)$. The form $\alpha\wedge d\alpha$ gives a section of the canonical bundle. Considering transition function \eqref{eq_trans}
we conclude that $c_1(\CC P^3)=2c_1(\alpha)$.  It means that if
$Pdx+Qdy+Rdz$ is a contact form in the chart $w=1$, then it extends to
the whole $\CC P^3$ only if the transition function to another charts have $w$ in
denominator in degree at most two. Therefore $P,Q,R$ are polynomials of
degree one. The explicit form of all such polynomials follows from a direct computation. 
\end{proof}
Quite the contrary, there are many algebraic contact structures on $\RR
P^3$.
\begin{example}
\label{ex_realform}
The following forms are contact forms on $\RR P^3$: 
$$\omega_1' =(yz^2+yw^2)dx+(-xz^2-xw^2)dy+(x^2w+y^2w+w)dz+(-x^2z-y^2z-z)dw, $$
\begin{align*}
\omega_2'=&
(x^2y+y^3+yz^2+yw^2)dx-(x^3+xy^2+xz^2+xw^2)dy\\
&+(x^2w+y^2w+z^2w+w^3)dz-(x^2z+y^2z+z^3+zw^2)dw.
\end{align*}
Note also that a small perturbation of the coefficients of a real
contact form doesn't affect the fact that $\omega'\wedge d\omega'$ never vanishes.
\end{example}

It seems not easy to enumerate real algebraic
curves which are contact with respect to these contact structures.


\begin{prop}
Any irreducible algebraic curve $C\in \CC P^3$ which is not a collection of lines is legendrian with at most one holomorphic contact structure.
\end{prop}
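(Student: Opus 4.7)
The plan is to argue by contradiction. Suppose $C$ is legendrian with respect to two linearly independent contact forms $\omega_1,\omega_2$, corresponding via~\eqref{eq_contactform} to non-degenerate skew-symmetric matrices $M_1,M_2$ acting on $\CC^4$, so that $\omega_i(p)(v) = v^{T} M_i p$ for a tangent direction $v$ at $p$. Since the legendrian condition $v^T M_i p = 0$ is linear in $M_i$, $C$ is automatically legendrian with respect to every form in the pencil $\omega_t := \omega_1 + t\,\omega_2$, and the idea is to extract strong constraints on $C$ from the members of the pencil at which the form degenerates.

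The Pfaffian $\mathrm{Pf}(M_1+tM_2)$ is a polynomial of degree at most two in $t$, and at each of its roots $t_0$ the matrix $M_{t_0}$ is nonzero (because $\omega_1,\omega_2$ are non-proportional) and of rank exactly $2$ (since skew matrices have even rank). Let $K_{t_0} := \ker M_{t_0}$, a $2$-plane in $\CC^4$, and $L_{t_0} := \PP(K_{t_0}) \subset \CC P^3$. The form with matrix $M_{t_0}$ factors through the linear projection $\CC^4 \to \CC^4/K_{t_0} \cong \CC^2$, and a direct check shows that being legendrian with respect to such a degenerate form is equivalent to the image of $C$ in $\CC P^1 = \PP(\CC^4/K_{t_0})$ being a single point; thus $C$ must lie in a plane through $L_{t_0}$.

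I would then split cases by whether $\mathrm{Pf}(M_1+tM_2)$ has two distinct roots or a single double root, equivalently by the Jordan structure of $A := M_1^{-1} M_2$. In the two-distinct-roots case, $A$ is diagonalizable with two $2$-dimensional eigenspaces (any nonzero $\omega_1$-symmetric nilpotent on such a $2$-plane is zero), the kernels $K_{t_1},K_{t_2}$ are exactly these eigenspaces, and the two lines $L_{t_1}, L_{t_2}$ are disjoint in $\CC P^3$; hence the two containing planes are distinct and meet in a single line, forcing $C$ to be a line and contradicting $\deg C \geq 3$. In the double-root case, $A = \lambda I + B$ for some nonzero $\omega_1$-symmetric nilpotent $B$, and a short analysis (using that $M_1 B$ is skew, hence of even rank, together with $B$ being nilpotent on $\CC^4$) forces $\mathrm{rank}\, B = 2$. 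Only one degenerate form then appears in the pencil, and $C$ is merely constrained to lie in one plane $H$.

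The remaining and most delicate step is a planar lemma: any irreducible legendrian curve contained in a plane $H \subset \CC P^3$ is a line. The proof is that for a general $p \in H$ the line $\ell(p) := H \cap p^{\perp_{\omega_1}}$ passes through $p$ and is $\omega_1$-isotropic, because $\omega_1$ vanishes on every pair of vectors in the $2$-dimensional subspace $\ell(p)$ when one of them is $p$ itself; hence $\ell(p)$ is Lagrangian, i.e.\ a legendrian line. Isotropy further yields $\ell(q) = \ell(p)$ for all $q \in \ell(p)$, so $H$ is foliated by legendrian lines. An irreducible algebraic legendrian curve in $H$ is tangent to this foliation at its smooth points and so, by uniqueness of integral curves, must coincide with a single leaf, which is a line. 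Applied in the double-root case this once more forces $\deg C = 1$, closing the contradiction. The main obstacle is exactly this double-root subcase, where the single plane constraint on $C$ must be combined with the planar lemma to conclude.
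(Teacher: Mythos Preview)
Your argument is correct, but it follows a genuinely different route from the paper's. The paper does not analyze the pencil $\omega_1+t\omega_2$ or its degenerate members at all. Instead it intersects the two contact $2$-plane distributions pointwise: at a generic $p\in\CC P^3$ the two contact planes meet in a single tangent direction, and the projective line through $p$ in that direction is legendrian for \emph{both} structures (since a line through $p$ with tangent in a contact plane is automatically legendrian for that structure). Hence the integral curves of the resulting line field are honest projective lines, and any curve legendrian for both structures must be a line outside the locus where the two forms are proportional; a Macaulay2 computation then verifies that this degeneracy locus is zero-dimensional.

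Your Pfaffian-based approach trades that computational check for a short case analysis and the planar lemma. A few remarks: in your two-distinct-roots case the transversality of $K_{t_1},K_{t_2}$ follows immediately from their being eigenspaces of $M_1^{-1}M_2$ for distinct eigenvalues, so no appeal to nilpotents is needed there. In the double-root case the observation that $M_1B$ is skew of rank equal to $\operatorname{rank}B$ indeed forces $\operatorname{rank}B=2$, but note that you already know $\ker M_{t_0}$ is $2$-dimensional directly from $M_{t_0}$ being nonzero skew with vanishing Pfaffian, so this step is a consistency check rather than new information. Finally, your planar lemma is exactly the Proposition the paper states just after this one (``Each contact curve in $L$ is a collection of lines through a point $p\in L$''), so you could simply cite it. Your version of the argument is more self-contained and in fact yields the slightly stronger conclusion that no irreducible curve of degree $\geq 2$ is legendrian for two distinct contact structures; the paper's version is more direct but leans on the explicit computation.
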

Indeed, when we intersect the distribution given by \eqref{eq_contactform} with the distribution given by $\omega=yxd-xdy+wdz-zdw$, we obtain a vector field $v$ almost everywhere (except finite collection of points as the Macaulay2 code below shows). On the other hand, we know that there is a line, tangent to the obtained distribution, through each point in $\CC P^3$. Therefore the integral curves for $v$ are lines almost everywhere. Hence, the only locus where a curve, tangent to both distribution, can leave, is the set where these two contact forms coincide, i.e. a finite collection of points.

The following code in Macaulay2, \cite{M2}, obtains the ideal $J$ of the variety of the points where two contact structures coincide. Comments are starting with ``--''. What follows after ``='' is the output of the corresponding command. We use these conventions throughout this paper.
\begin{verbatim}
use QQ[p,q,r,a,b,c,x,y,z,w]
a1=p*y-q*z+a*w
a2=-p*x+r*z+b*w
a3=q*x-r*y+c*w
a4=-a*x-b*y-c*z

I=ideal(a1*x-a2*y, a2*w-a3*x,a3*z-a4*w)
C=minimalPrimes I
J=C_8  -- all the other ideals C_0,C_1,... give lines if we fix a,b,c,p,q,r
dim J  -- =7
-- 7-6=1, because we have 6 parameters p,q,r,a,b,c 
-- so it is just several lines through the origin 
-- that is, a collection of points after the homogenisation.
\end{verbatim}

The global Reeb vector field for the contact structure $\omega=ydx-xdy+wdz-zdw$ is given by $y\frac{\partial}{\partial x}
-x\frac{\partial}{\partial y}+w\frac{\partial}{\partial z} -
z\frac{\partial}{\partial w}$. Its trajectories (which are also the fibers of the Penrose map $\CC P^3\to S^4$) are given by
\begin{equation}
\varphi(t)=\left(A\frac{(t^2-1)}{(t^2+1)},2A\frac{t}{(t^2+1)},B\frac{((t+k)^2-1)}{((t+k)^2+1)},2B\frac{(t+k)}{((t+k)^2+1)}\right)
\end{equation}
and $(\frac{t^2-1}{t^2+1})'=\frac{4t}{t^2+1},(\frac{2t}{t^2+1})'=\frac{t^2-1}{t^2+1}$. So, the Reeb vector field just rotates in $xy$ plane and $zw$ plane on the same
angle. For each fixed angle this gives a linear transformation.

\section{Contact form automorphisms}
It is known that the group of automorphisms of $\CC P^3$ which
preserve the form $\omega = ydx-xdy+wdz-zdw$ is the symplectic group
$Sp(4,\CC)$. Indeed, we have 6 conditions
  on the coefficients of a matrix $A\in GL(4,\CC)$, since $A$
  preserves $\omega$,  and the condition
  $\det A\ne 0$, but one
can check (by Macaulay2 for example), that the set of such $A\in
\CC^{16}$ is a quasiprojective
variety of dimension 10. The dimension count gives $\dim Sp(4,\CC)=10$ and $\dim PGL(4,\CC)=15$, what
agrees with the fact the set of all contact structures in \eqref{eq_contactform} is five-dimensional.

\begin{prop}
\label{prop_generators}
We list the set of generators of this group $Sp(4,\CC)$.

\begin{itemize}
\begin{minipage}[b]{0.45\textwidth}
\item 1) $x\to x+\lambda y, 
\begin{pmatrix}
1 & \lambda & 0 & 0 \\
0 & 1 & 0 & 0 \\
0 & 0 & 1 & 0 \\
0 & 0 & 0 & 1 \\
\end{pmatrix}$

\item 2) $x\to y, y\to -x, \begin{pmatrix}
0 & 1 & 0 & 0 \\
-1 & 0 & 0 & 0 \\
0 & 0 & 1 & 0 \\
0 & 0 & 0 & 1 \\
\end{pmatrix}$
\end{minipage}
\begin{minipage}[b]{0.45\textwidth}
\item 3) $x\to z, y\to w, 
\begin{pmatrix}
0 & 0 & 1 & 0 \\
0 & 0 & 0 & 1 \\
1 & 0 & 0 & 0 \\
0 & 1 & 0 & 0 \\
\end{pmatrix}$
\item 4)  $x\to x+\lambda w, z\to z+\lambda y, 
\begin{pmatrix}
1 & 0 & 0 & \lambda \\
0 & 1 & 0 & 0 \\
0 & \lambda & 1 & 0 \\
0 & 0 & 0 & 1 \\
\end{pmatrix}
$
\end{minipage}
\item 5) $x\to\lambda x,y\to y/\lambda, 
\begin{pmatrix}
\lambda & 0 & 0 & 0 \\
0 & 1/\lambda & 0 & 0 \\
0 & 0 & 1 & 0 \\
0 & 0 & 0 & 1 \\
\end{pmatrix}$
\end{itemize}
\end{prop}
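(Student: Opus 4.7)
My plan is to proceed in two stages.

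First, I would verify that each of the five listed matrices lies in $Sp(4,\CC)$, i.e., preserves the symplectic form $d\omega = -2(dx\wedge dy + dz\wedge dw)$. Writing $J$ for the Gram matrix of $d\omega$, the condition is $A^T J A = J$, which is a routine block computation for each of the five matrices.

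For the generation statement, since $Sp(4,\CC)$ is connected of dimension $10$, it is enough to exhibit a spanning set of $\mathfrak{sp}(4,\CC)$ inside the Lie algebra of the subgroup $H$ generated by the listed matrices. Differentiating generators 1), 4), 5) at $\lambda=0$ yields the Lie algebra elements $E_{12}$, $E_{14}+E_{32}$, and $H_1 := E_{11}-E_{22}$, where $E_{ij}$ is the standard $4\times 4$ matrix unit. Conjugation by the discrete generator 2) turns $E_{12}$ into $-E_{21}$, and conjugation by 3), which swaps the two symplectic blocks, carries $E_{12}, E_{21}, H_1$ to $E_{34}, E_{43}, H_2 := E_{33}-E_{44}$. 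These six elements span the Cartan subalgebra together with the four long-root vectors. The remaining four short-root vectors of $\mathfrak{sp}(4,\CC)$ are produced from $E_{14}+E_{32}$ by Lie brackets against the Levi factor already obtained; a direct calculation gives
\[
[E_{21},E_{14}+E_{32}] = E_{24}-E_{31}, \qquad [E_{43},E_{14}+E_{32}] = -E_{13}+E_{42},
\]
\[
[E_{43},E_{24}-E_{31}] = -(E_{23}+E_{41}).
\]
Since these four mixed elements occupy pairwise disjoint matrix positions, the ten elements constructed are visibly linearly independent. By connectedness of $Sp(4,\CC)$, the subgroup $H$ then coincides with the whole group.

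The main obstacle I expect is the bookkeeping in this second step: one has to identify which combinations of the $E_{ij}$ actually lie in $\mathfrak{sp}(4,\CC)$ and then verify the three bracket identities above. A longer but more elementary alternative avoids Lie algebras altogether: use generators 1), 2), 3), 5) to realise the block-diagonal $SL(2,\CC)\times SL(2,\CC)$ subgroup and perform Gauss-type elimination on the two diagonal symplectic blocks of an arbitrary $g\in Sp(4,\CC)$, then use 4) together with its conjugates by this subgroup to clear the mixed off-diagonal entries. Either route rests on the same observation: the subgroup generated by 1), 2), 3), 5) already supplies six of the ten Lie algebra dimensions, and the single new generator 4) together with its conjugates supplies the remaining four.
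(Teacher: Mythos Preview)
The paper does not supply a proof for this proposition; in the introduction the author announces that omitted proofs are ``straightforward computations,'' and this one is left at that. So there is nothing in the paper to compare against beyond the bare assertion.

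Your argument is correct. The verifications that each listed matrix lies in $Sp(4,\CC)$ and the bracket computations
\[
[E_{21},E_{14}+E_{32}]=E_{24}-E_{31},\quad [E_{43},E_{14}+E_{32}]=-E_{13}+E_{42},\quad [E_{43},E_{24}-E_{31}]=-(E_{23}+E_{41})
\]
all check out, and the ten elements you produce are indeed a basis of $\mathfrak{sp}(4,\CC)$ for the form $dx\wedge dy+dz\wedge dw$. The only step you might make more explicit is the passage from ``the tangent vectors Lie-generate $\mathfrak{sp}(4,\CC)$'' to ``$H=Sp(4,\CC)$'': having $\exp(tX),\exp(tY)\subset H$ does not immediately give $\exp(t[X,Y])\subset H$ unless one invokes the standard fact (Chow--Rashevskii, or the argument that the subgroup generated by finitely many one-parameter subgroups whose tangents Lie-generate $\mathfrak{g}$ is open, hence closed, hence all of a connected $G$). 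Citing this once would close the argument cleanly. Your alternative Gauss-elimination route is also sound and is closer in spirit to what the paper's Macaulay2 code in Section~\ref{sec_contactomorphisms} actually does when it moves three points to standard position by composing these generators.
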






\begin{prop}
The restriction of a contact structure \eqref{eq_contactform} on a plane
$z=w=0$ is $p(ydx-xdy)=0$ by an easy computations. 
\end{prop}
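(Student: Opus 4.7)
The plan is to carry out the substitution directly. On the plane $\{z=w=0\}\subset\CC P^3$, both coordinates $z$ and $w$ vanish identically, hence their differentials $dz$ and $dw$ pull back to zero under the inclusion $\{z=w=0\}\hookrightarrow \CC P^3$. First I would write out the pullback of the general contact form \eqref{eq_contactform} by setting $z=w=0$ and $dz=dw=0$ in each of the four summands.

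Next I would collect what survives: the $dz$ and $dw$ terms are killed outright because $dz=dw=0$ on the plane, so only the $dx$ and $dy$ coefficients matter. Substituting $z=w=0$ into the $dx$ coefficient $(py-qz+aw)$ leaves $py$, and substituting into the $(-px+rz+bw)$ coefficient leaves $-px$. Thus the restriction is $py\,dx - px\,dy = p(y\,dx - x\,dy)$, as claimed.

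There is essentially no obstacle: the argument is a one-line substitution. The only thing worth emphasizing is the conceptual content, namely that the restriction depends only on the single parameter $p$ among $a,b,c,p,q,r$, so the induced distribution on the plane $\{z=w=0\}$ degenerates to the pencil through the origin $(x,y)=(0,0)$ whenever $p\ne 0$, and becomes identically zero when $p=0$. This observation is what one would want to record for later use in the classification and tropicalization arguments.
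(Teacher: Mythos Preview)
Your proof is correct and follows exactly the approach the paper intends: the paper itself omits the proof as ``an easy computation,'' and your direct substitution $z=w=0$, $dz=dw=0$ into \eqref{eq_contactform} is precisely that computation. The added remark about the dependence on $p$ alone and the resulting pencil of lines through the origin is also exactly what the paper exploits immediately afterward.
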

Therefore the vector field, generated by the contact form, at a point $(x,y)$
equals to a vector $(x,y)$, so the only integral curves are the lines passing through
the origin. Since all the planes are equivalent under the action of $GL(4,\CC)$, all the planar contact curves are collections of lines.

Let us choose an arbitrary plane $L$.
\begin{prop}
Each contact curve in $L$ is a collection of lines through a point
$p\in L$. Moreover, $L$ is the contact plane at $p$, i.e. $L$ is the
zero set of $\omega$ computed at this point $p$.
\end{prop}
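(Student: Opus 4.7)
The plan is to combine the previous proposition (every contact curve in $L$ is a union of lines) with a pointwise analysis of $\omega|_L$, after first reducing to a convenient model plane. The key observation is that $\mathrm{Sp}(4,\CC)$, which preserves $\omega$, acts transitively on planes in $\CC P^3$: the symplectic form gives an $\mathrm{Sp}(4,\CC)$-equivariant isomorphism $\CC^4\cong(\CC^4)^*$, so transitivity on points of $\CC P^3$ (immediate from symplectic Gram--Schmidt) is equivalent to transitivity on hyperplanes in $\CC^4$, i.e.\ on projective planes in $\CC P^3$. It therefore suffices to prove the statement for the specific plane $L_0=\{w=0\}$.

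On $L_0$ one has $w=0$ and $dw|_{L_0}=0$, so $\omega|_{L_0}=y\,dx-x\,dy$. This 1-form vanishes at a single point $p=[0{:}0{:}1{:}0]$, and on $L_0\setminus\{p\}$ its integral curves are the lines $y=\lambda x$ together with $x=0$ — precisely the punctured projective lines through $p$. Evaluating the affine-cone representative of $\omega$ at $(0,0,1,0)$ gives $-dw$, whose zero locus in $\CC^4$ is the hyperplane $\{w=0\}$; thus $\ker\omega_p = L_0$, i.e.\ $L_0$ is the contact plane at $p$, which settles the ``moreover'' clause.

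To conclude, I would invoke the previous proposition: any contact curve $C\subset L_0$ is a union of lines, and any such line, being tangent to the kernel distribution of $\omega|_{L_0}$ at all but finitely many points, must agree with the closure of a leaf of that distribution, and hence pass through $p$. Pushing forward along the $\mathrm{Sp}(4,\CC)$-element sending $L_0$ to a given plane $L$ then yields both conclusions for an arbitrary $L$.

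The only step requiring genuine care is the transitivity of $\mathrm{Sp}(4,\CC)$ on hyperplanes; everything else is a short direct computation, so no serious obstacle is anticipated. Alternatively, one can bypass the reduction entirely by writing $L=\{\alpha x+\beta y+\gamma z+\delta w=0\}$, computing $\omega|_L$ directly, and locating its unique zero, but this is notationally heavier and gains nothing essential.
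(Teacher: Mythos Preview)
Your argument is correct and follows essentially the same strategy as the paper: reduce to a model plane by a group action and compute the restriction of $\omega$ there. The paper's version is terse to the point of being only a sketch---no formal proof environment is given, and the ``moreover'' clause is never verified explicitly. The one genuine difference is the choice of group: the paper invokes $GL(4,\CC)$, which acts transitively on planes for free but forces one to carry along the general contact form \eqref{eq_contactform}, whereas you fix $\omega$ and use $\mathrm{Sp}(4,\CC)$, paying for this with the null-correlation transitivity argument. Your route is arguably cleaner, and it also supplies the check that $L_0$ is the contact plane at $p=[0{:}0{:}1{:}0]$, which the paper leaves to the reader.
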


\begin{prop}
All the elements of $Sp(4,\CC)$ which preserve $(0,0,0,1),(1,1,1,1),(-1,1,-1,1)$ are of the form
\begin{equation}
\mathrm{Stab^3_\mu}:x\to x, y \to y+\mu(z-x), z\to z, w\to w-\mu(z-x),
\begin{pmatrix}
1 & 0 & 0 & 0 \\
-\mu & 1 & \mu & 0 \\
0 & 0 & 1 & 0 \\
\mu & 0 & \mu & 1 \\
\end{pmatrix}
\label{eq_fixator}
\end{equation}
\end{prop}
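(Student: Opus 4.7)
The plan is to exploit linearity: in a basis where the three fixed points are coordinate vectors, $A$ is highly constrained, and the symplectic relation pins down the remaining freedom.

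The three points are linearly independent as vectors in $\CC^4$, so together with, say, $e_4=(1,0,0,0)$ they form a basis $(e_1,e_2,e_3,e_4)$ with $e_1=(0,0,0,1)$, $e_2=(1,1,1,1)$, $e_3=(-1,1,-1,1)$. Any $A\in Sp(4,\CC)$ fixing the three projective points must preserve each line $\CC e_i$ for $i=1,2,3$, so $Ae_i=\lambda_ie_i$, while the image of $e_4$ is \emph{a priori} unrestricted: $Ae_4=ae_1+be_2+ce_3+de_4$. Thus in the new basis $A$ depends on seven unknowns.

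Next I would transport the symplectic form to this basis, computing the Gram matrix $\widetilde J=(e_i^T J e_j)$ of the bilinear form coming from $d\omega=-2(dx\wedge dy+dz\wedge dw)$, and then impose the symplectic condition $\widetilde A^T\widetilde J\widetilde A=\widetilde J$. The six independent entries of this skew-symmetric identity split naturally: three of them are the purely multiplicative relations
\[
\lambda_1\lambda_2=\lambda_1\lambda_3=\lambda_2\lambda_3=1,
\]
which force $\lambda_1=\lambda_2=\lambda_3=\varepsilon$ with $\varepsilon^2=1$. The choice $\varepsilon=-1$ corresponds to the central element $-I\in Sp(4,\CC)$, which acts trivially on $\CC P^3$, so we restrict to $\varepsilon=1$. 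The remaining three equations are linear in $(a,b,c,d)$ and yield $b=c$, $d=1$ and $a=-4c$, a one-parameter family.

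Finally, I would translate back: $Ae_4=-4c\,e_1+c\,e_2+c\,e_3+e_4=(1,2c,0,-2c)$, while evaluating the proposed transformation $x\mapsto x,\ y\mapsto y+\mu(z-x),\ z\mapsto z,\ w\mapsto w-\mu(z-x)$ on $(1,0,0,0)$ gives $(1,-\mu,0,\mu)$. Matching these with $\mu=-2c$ and checking agreement on $e_1,e_2,e_3$ (which the stated formula manifestly fixes) completes the identification. The only mildly delicate step is computing $\widetilde J$ correctly and separating the resulting six constraints into the multiplicative and linear pieces; everything else is routine bookkeeping, exactly the sort of thing that Macaulay2 can also confirm.
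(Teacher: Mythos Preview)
Your argument is correct and is precisely a clean way of carrying out the ``direct calculation'' that the paper invokes without giving details. Choosing the three fixed vectors as part of a basis immediately reduces the sixteen matrix entries to the seven unknowns $\lambda_1,\lambda_2,\lambda_3,a,b,c,d$, and the six independent entries of the skew-symmetric relation $\widetilde A^T\widetilde J\widetilde A=\widetilde J$ split exactly as you describe: the three pairings $\Omega(e_i,e_j)$ for $i,j\le 3$ are all nonzero (they equal $-1$, $1$, $4$), forcing $\lambda_i\lambda_j=1$, while the three pairings involving $e_4$ give the linear system yielding $b=c$, $d=1$, $a=-4c$. Your handling of the sign $\varepsilon=\pm1$ via the central element is also the right observation. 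One cosmetic remark: the matrix displayed in the statement has a typo in the $(4,3)$ entry (it should be $-\mu$ to match $w\mapsto w-\mu(z-x)$), which your computation of $Ae_4=(1,-\mu,0,\mu)$ with $\mu=-2c$ in fact confirms.
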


\begin{proof}
Direct computation.
\end{proof}

It is easy to send any point of $\CC P^3$ to $(0,0,0,1)$ by an
element of $\mathrm{Sp}(4)$. Then, points of $\CC P^3$ can be divided in
two classes: those, lying on the plane $L$ through $(0,0,0,1)$ such that
$\omega((0,0,0,1))|_{L}=0$ and all the others. The subgroup of
$\mathrm{Sp}(4)$, stabilizing $(0,0,0,1)$ acts on both these classes transitively. 
Now, consider a point $p$ which is not on the contact planes through
$(0,0,0,1)$ and $(1,1,1,1)$. It can be proven by a direct computation that there exists an element in the
subgroup of $\mathrm{Sp}(4)$ stabilizing $(0,0,0,1)$ and $(1,1,1,1)$ that
sends $p$ to $(-1,1,-1,1)$, thus we have the following lemma.

\begin{lemma}
\label{lem_transitive}
The group $\mathrm{Sp}(4)$ is {\it generically 3-transitive}, i.e. every three points  $p_1,p_2,p_3\in \CC P^3$ in general position can be sent to every three points $q_1,q_2,q_3\in \CC P^3$ in general position by an element $a\in \mathrm{Sp}(4,\CC)$. In general, the set $\{a\in \mathrm{Sp}(4, \CC)|a(p_i)=q_i,i=1,2,3\}$ is of dimension one.
\end{lemma}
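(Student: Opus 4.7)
The plan is to reduce the general case to a canonical one: it suffices to show that a generic triple $(p_1,p_2,p_3)$ can be sent by an element of $\mathrm{Sp}(4,\CC)$ to the specific triple $(P_1,P_2,P_3)=((0,0,0,1),(1,1,1,1),(-1,1,-1,1))$, because then $(q_1,q_2,q_3)$ can be sent to the same reference triple and we compose one map with the inverse of the other. The dimension count $\dim\mathrm{Sp}(4,\CC)-3\cdot\dim\CC P^3 = 10-9 = 1$ is consistent with the claimed one-parameter family, and Proposition with formula~\eqref{eq_fixator} already identifies that family as $\mathrm{Stab}^3_\mu$ at the reference triple; so once transitivity is established, pulling back $\mathrm{Stab}^3_\mu$ by our chosen element gives the required one-dimensional set.

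I would build the element bringing $(p_1,p_2,p_3)$ to $(P_1,P_2,P_3)$ in three stages, each using a smaller subgroup, as sketched in the paragraph preceding the lemma. First, since the generators $1)$--$5)$ of Proposition~\ref{prop_generators} visibly produce all translations and rescalings, the action of $\mathrm{Sp}(4,\CC)$ on $\CC P^3$ is transitive, so some $A_1\in\mathrm{Sp}(4,\CC)$ sends $p_1$ to $(0,0,0,1)$. Second, I look at the stabilizer $H_1\subset\mathrm{Sp}(4,\CC)$ of $(0,0,0,1)$, which has dimension $10-3=7$. The contact plane $L_0$ at $(0,0,0,1)$ is $H_1$-invariant; the previous proposition about planar contact curves shows that on $L_0$ the orbit structure is restricted (it is a pencil of lines through $(0,0,0,1)$), whereas on its complement $H_1$ acts transitively: one sees this by exhibiting enough explicit elements drawn from generators $1),3),4),5)$ that fix $(0,0,0,1)$ and move any point off $L_0$ to $(1,1,1,1)$. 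Since the genericity hypothesis on $(p_1,p_2,p_3)$ ensures $A_1(p_2)\notin L_0$, some $A_2\in H_1$ sends $A_1(p_2)$ to $(1,1,1,1)$.

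Third, denote by $H_2\subset H_1$ the stabilizer of both $(0,0,0,1)$ and $(1,1,1,1)$; it has dimension $\geq 7-3=4$. The bad locus for the third step is the union of the two contact planes through $(0,0,0,1)$ and $(1,1,1,1)$, a codimension-one subset that the generic hypothesis avoids after applying $A_2A_1$. I claim $H_2$ acts transitively on the complement, and that the stabilizer of the additional point $(-1,1,-1,1)$ inside $H_2$ is precisely the one-parameter subgroup $\mathrm{Stab}^3_\mu$ from~\eqref{eq_fixator}. Both claims are direct computations: write an arbitrary matrix $A\in\mathrm{Sp}(4,\CC)$, impose $A(0,0,0,1)=(0,0,0,1)$ and $A(1,1,1,1)=(1,1,1,1)$ up to scalars to pin down $H_2$ as an explicit 4-parameter family, and check that mapping $(-1,1,-1,1)$ to a generic $(x,y,z,w)$ cuts out exactly three independent conditions, leaving a 1-parameter fiber. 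Composing $A_3A_2A_1$ finishes the existence, and the dimension of $\mathrm{Stab}^3_\mu$ gives the one-dimensional set of solutions.

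The main obstacle is the transitivity of $H_2$ on the complement of the two contact planes, since the other two stages follow from known transitivity plus genericity. I would handle this by producing $H_2$ explicitly (a short Macaulay2 or hand computation, parallel to~\eqref{eq_fixator}) and verifying that its differential at $(-1,1,-1,1)$ surjects onto $T_{(-1,1,-1,1)}\CC P^3$; equivalently, the three-conditions-cut-out-three-dimensions claim above. Everything else is bookkeeping.
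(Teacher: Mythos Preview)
Your proposal is correct and follows essentially the same three-stage reduction through successive stabilizers that the paper sketches in the paragraph immediately preceding the lemma. The only difference is one of execution: where you describe the stages conceptually and defer the $H_2$-transitivity to a short computation, the paper carries out all three stages constructively via explicit Macaulay2 code that composes the generators of Proposition~\ref{prop_generators} step by step to produce the actual matrix sending $(a_1,b_1,c_1,1),(a_2,b_2,c_2,1),(a_3,b_3,c_3,1)$ to the reference triple.
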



\section{Curves on a hypersurface of degree two}
\label{sec_quadric}
Consider a contact form $\omega$ as in \eqref{eq_contactform}.
We will find the restriction of $\omega$ on the surface $X$, given by 
\begin{equation}
\label{eq_quadric}
\{xy-zw=0\}=Im(f\colon\CC P^1\times \CC P^1\to \CC P^3), f\colon(\mu:\mu'),(\nu:\nu')\to
(\mu\nu',\mu'\nu,\mu\nu,\mu'\nu').
\end{equation}

Note, that any irreducible hypersurface $X'$ of degree 2 in $\CC P^3$ is projectively equivalent to $X$, therefore in this way we will describe all the legendrian curves on all the non-degenerate hypersurfaces $X'$ of degree $2$.

Computing in the affine chart $(\mu,\nu)\to (\mu,\nu,\mu\nu,1)\in X$, we obtain

$$f_*:\frac{\partial}{\partial \mu}\to \frac{\partial}{\partial x} +y \frac{\partial}{\partial z},
\frac{\partial}{\partial \nu}\to \frac{\partial}{\partial y} +x
\frac{\partial}{\partial z}. $$

The fact that $\omega (f_*(M\frac{\partial}{\partial
  \mu}+N\frac{\partial}{\partial \nu})=0$ at $ (\mu,\nu,\mu\nu,1)$ is equivalent
to

$$(p\nu-q\mu\nu+a)M+(-p\mu+r\mu\nu+b)N+(q\mu-r\nu+c)(M\nu+N\mu)=0, \text{\ i.e.}$$

$$M(p\nu +a-r\nu^2+c\nu)+N(-p\mu+b+q\mu^2+c\mu)=0.$$


If a curve is locally of type $(\mu(t),\nu(t))$, then its tangent vector is
given by the formula
$\mu'\frac{\partial}{\partial \mu}+\nu'\frac{\partial}{\partial\nu}$. But this, after reparametrization, rewrites as 
\begin{equation}\label{diffur}
\frac{d\mu}{dt} = (c-p)\mu+b+q\mu^2,
\frac{d\nu}{dt} = -((p+c)\nu +a-r\nu^2).
\end{equation}

We are looking for the algebraic leafs of this foliation. See \cite{zbMATH05995244,zbMATH01602924} for details about space of foliations
with algebraic leafs, \cite{zbMATH05555269} for the classification of
the quadratic systems with the first integral.

\begin{example}Consider the curve $(t,t^2,t^3,1)$ which lies on the
hypersurface $\{xy-zw=0\}$. It is legendrian with respect to the form
$3dx-3dy+wdz-zdw=0$, so we put $p=3,c=1,q=a=r=b=0$ and
\eqref{diffur} becomes $(\mu',\nu') = (-2\mu,-4\nu)=(\mu,2\nu)$, hence $\mu =
e^t,\nu=e^{2t}$ which is the same as $(\mu,\nu)=(t,t^2)$, and
subsequently $(\mu,\nu,\mu\nu,1)=(t,t^2,t^3,1)$.
\end{example}

Depending on the coefficients, each equation
$\frac{dx}{dt}=c_0+c_1x+c_2x^2$ after a linear change of the coordinates (over complex numbers) becomes one in the following list: 
\begin{itemize}
\item $\frac{dx}{dt}=c,$
\item $\frac{dx}{dt}=cx,$
\item $\frac{dx}{dt}=cx^2,$
\item $\frac{dx}{dt}=c(x^2-1)$.
\end{itemize}

\begin{example}
If $\frac{d\mu}{dt}=c_0(\mu^2-1),\frac{d\nu}{dt}=c_1(\nu^2-1)$, then
$\frac{d\mu}{\mu^2-1}=c_3\frac{d\nu}{\nu^2-1}$. That implies
$$\log(\frac{\mu-1}{\mu+1})=c_4\log(\frac{\nu-1}{\nu+1})+c_5,$$ and finally
$c_6(\frac{\nu-1}{\nu+1})=c_7(\frac{\mu-1}{\mu+1})^{d_1}$ which is
algebraic if $d_1\in\QQ$.
\end{example}

To the contrary, the case $\frac{d\mu}{dt}=c_0(\mu^2-1),
\frac{d\nu}{dt}=c_1\nu^2$ always gives a non-algebraic curve if
$c_0c_1\ne 0$ because this gives an equation of the type $\frac{\mu-1}{\mu+1}=e^{1/\nu}$. So, by a direct computation we prove the following theorem.

\begin{thm}
\label{th_quadricsurface}
After a linear change of coordinates $\tilde\mu=c_0+c_1\mu,\tilde\nu=c_2+c_3\nu$
any legendrian curve on the quadric $xy-zw=0$ with parametrization \eqref{eq_quadric} can be written in
one of the following standard forms :
\begin{itemize}

\item $c_0(\frac{\nu-1}{\nu+1})^{d_1}=c_1(\frac{\mu-1}{\mu+1})^{d_2}$,
\item $c_0\nu^{d_1}=c_1\mu^{d_2}$,
\item $c_0(\frac{\nu-1}{\nu+1})^{d_1}=c_1\mu^{d_2}$ ,
\item $c_0(\frac{\mu-1}{\mu+1})^{d_1}=c_1\nu^{d_2}$,
\item $c_0\mu\nu+c_1\mu+c_2\nu=0$,
\item $c_0\mu+c_1\nu+c_2=0$,
\item $c_0\mu\nu+c_1\mu+c_2=0$, 
\item $c_0\mu\nu+c_1\nu+c_2=0$,
\item $\mu=c_0$, 
\item $\nu=c_0$,
\end{itemize}
where $c_i\in\CC,d_i\in \NN_0$ are some constants.
\end{thm}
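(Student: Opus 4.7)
The plan is to exploit the fact that the ODE system \eqref{diffur} is \emph{decoupled}: $\mu$ and $\nu$ satisfy independent Riccati-type equations of the shape $\frac{dx}{dt}=c_0+c_1x+c_2x^2$. So I would solve for $\mu(t)$ and $\nu(t)$ separately, then eliminate the parameter $t$ and sort out which resulting relations are algebraic.

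The first step is the normalization already recorded in the excerpt: any such scalar Riccati ODE is brought by an affine change $\tilde{x}=\alpha+\beta x$ (the changes allowed in the theorem's statement) to one of the four normal forms $x'=c,\ x'=cx,\ x'=cx^2,\ x'=c(x^2-1)$, together with the degenerate possibility $x'\equiv 0$, i.e.\ $x\equiv c_0$. So a linear change of coordinates $(\tilde\mu,\tilde\nu)=(c_0+c_1\mu,c_2+c_3\nu)$ puts each factor independently into one of these five buckets.

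Next I would enumerate the resulting pairs. The two degenerate buckets give directly the forms $\mu=c_0$ and $\nu=c_0$ (lines of the ruling). The four ``diagonal'' pairs — where both ODEs sit in the same bucket — integrate and eliminate $t$ cleanly: two constants give $c_0\mu+c_1\nu+c_2=0$; two linears give $\mu=e^{at}$, $\nu=e^{bt}$, so $c_0\mu^{b}=c_1\nu^{a}$, recovering form 2 when $a/b\in\QQ$; two quadratics give $1/\mu,1/\nu$ both affine in $t$, hence $c_0\mu\nu+c_1\mu+c_2\nu=0$; two ``$x^2-1$'' ODEs give $(\mu\mp 1)/(\mu\pm 1)=e^{2at+\cdots}$ and likewise for $\nu$, producing form 1 when the ratio of rates is rational. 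The mixed pairs with one side constant and the other quadratic (respectively quadratic and one side constant) produce $1/\nu$ affine in $\mu$ (respectively $1/\mu$ affine in $\nu$), which clear denominators to forms 7 and 8. The mixed pairs with one side linear and the other $x^2-1$ produce $(\nu-1)/(\nu+1)$ as a rational power of $\mu$, giving forms 3 and 4 (rational-exponent condition needed for algebraicity).

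All remaining mixed combinations — constant with linear, constant with $x^2-1$, linear with quadratic, quadratic with $x^2-1$ — yield relations of transcendental shape such as $\nu=\exp(\alpha\mu+\beta)$, $(\nu-1)/(\nu+1)=\exp(\alpha\mu+\beta)$, $\nu=\exp(\alpha/\mu+\beta)$, or $(\nu-1)/(\nu+1)=\exp(\alpha/\mu+\beta)$ with $\alpha\ne 0$, as already shown in the excerpt's discussion around the $e^{1/\nu}$ example. These admit no nontrivial polynomial relation between $\mu$ and $\nu$, so they contribute no algebraic leaves and are excluded from the list. Thus the main obstacle is not any single calculation but the bookkeeping of the $5\times 5$ grid of pairs together with the transcendence verification that each excluded case genuinely fails to be algebraic; this last point is handled uniformly by checking that $\exp(P(x))$ for a nonconstant rational $P$ satisfies no polynomial relation with $x$.
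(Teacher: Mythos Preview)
Your proposal is correct and follows precisely the route the paper sets up: reduce the decoupled Riccati system \eqref{diffur} to the four normal forms listed just before the theorem, then run through the pairings, keeping exactly those that integrate to an algebraic relation. The paper does not spell out the full $($type of $\mu$, type of $\nu)$ case analysis---it only illustrates with the $(x^2-1,x^2-1)$ and $(x^2-1,x^2)$ pairs and leaves the rest as a straightforward computation---so your enumeration (including the degenerate $x'\equiv 0$ bucket needed for the ruling lines $\mu=c_0$, $\nu=c_0$, and the singular constant leaves of the other normal forms) is exactly what is required to complete the argument.
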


\begin{remark}
\label{pr_quadrics}
Given this classification one might count the legendrian curves of given degree and genus lying in a quadric. For example, all rational quartics lie on a quadric surface. 
\end{remark}

\section{Legendrian curves of degrees one and two}

\begin{definition}
A map $f:M\to \CC P^3$ is {\it totally geodesic} if $f(M)$ is a legendrian line.
\end{definition}

Let us study the rational legendrian curves of degrees one and two.
In the case $\deg x,y,z=1$ or $2$, it happens that such curves are parametrized by $(f,p + qf, r + pf)$, where $f$ is a polynomial of degree 1 or 2. 

Consider a general line $l = (a_0+b_0s,a_1+b_1s,a_2+b_2s,a_3+b_3s)$ in
$\CC P^3$. Putting it into the contact form we conclude that the line $l$ is legendrian iff $a_1b_0-a_0b_1+a_3b_2-a_2b_3=0$.  This means that for a point $A$ we have one-dimensional family of
legendrian lines through $A$, this family is just the contact plane through $A$. Therefore the number of legendrian lines through one point and one line
equals one. 


Let us observe one important property of legendrian lines. One can
think about a line $l$ in $\CC P^3$ as four section $x,y,z,w$ of $\mathcal O(1)$
on $\CC P^1$. Let $X,Y,Z,W$ be the roots of $x,y,z,w$, $x=a_0+b_0s, X=-\frac{a_0}{b_0}$, $y=a_1+b_1s, Y = -\frac{a_1}{b_1}$, etc.

\begin{prop} The following three conditions are equivalent:
\label{prop_balance}
\begin{itemize}
\item the line $l$ is legendrian,\\ 
\item $y(X)/z(X) = w(Z)/x(Z)$,\\ 
\item $x(Y)/w(Y)=z(W)/y(W)$.\\
\end{itemize}
\end{prop}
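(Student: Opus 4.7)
My approach is a direct coordinate computation using the following elementary observation: for two linear sections $f = a_f + b_f s$, $g = a_g + b_g s$ of $\mathcal O(1)$ on $\CC P^1$, the value of $f$ at the (affine) root $-a_g/b_g$ of $g$ is, up to the factor $-1/b_g$, the $2\times 2$ determinant $[f,g] := a_f b_g - b_f a_g$. The legendrian condition derived above, $a_1 b_0 - a_0 b_1 + a_3 b_2 - a_2 b_3 = 0$, rewrites as $[x,y] + [z,w] = 0$, so the plan is to check that each of the two ratio conditions collapses to this single equation.

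First I would plug $X = -a_0/b_0$ into $y$ and $z$ to get $y(X) = -[x,y]/b_0$ and $z(X) = -[x,z]/b_0$, whence
\[
\frac{y(X)}{z(X)} = \frac{[x,y]}{[x,z]}.
\]
An identical computation at $Z = -a_2/b_2$ gives
\[
\frac{w(Z)}{x(Z)} = \frac{[z,w]}{[z,x]} = -\frac{[z,w]}{[x,z]}.
\]
Equating these two expressions and clearing the common denominator $[x,z]$ yields exactly $[x,y] + [z,w] = 0$, the legendrian condition. For the third condition, the same computation with the pairs $(x,y)$ and $(z,w)$ interchanged on the evaluation side produces $x(Y)/w(Y) = -[x,y]/[y,w]$ and $z(W)/y(W) = [z,w]/[y,w]$, and their equality is once again equivalent to $[x,y] + [z,w] = 0$.

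The only nuisance I anticipate is the degenerate strata: if $b_0 = 0$ the root $X$ escapes to infinity of the affine chart, and if $[x,z]=0$ the points $X$ and $Z$ coincide and the ratio $y(X)/z(X)$ is ill-defined (both of $y$ and $z$ vanish, or none of them). I would dispatch this by working throughout in homogeneous coordinates $[s_0 : s_1]$ on $\CC P^1$, so that $x,y,z,w \in H^0(\CC P^1, \mathcal O(1))$ and $X, Y, Z, W$ are points of $\CC P^1$; then $y(X)/z(X)$ is well-defined as a point of $\CC P^1$ whenever $X \neq Z$, the bracket $[f,g]$ becomes the natural antisymmetric pairing on $H^0(\CC P^1, \mathcal O(1))$, and the computation above goes through verbatim. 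I do not foresee any genuine obstacle beyond this bookkeeping, since the proposition is an algebraic identity of linear forms in the eight unknowns $a_i, b_i$.
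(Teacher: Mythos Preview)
Your argument is correct and is essentially the same direct computation the paper does: the paper's proof simply tabulates the values $x,y,z,w$ at the four roots $X,Y,Z,W$ and leaves the reader to read off the two ratio identities from the table, whereas you organize the same numbers via the bracket $[f,g]=a_fb_g-b_fa_g$ and make the cancellation explicit. Your treatment of the degenerate loci ($b_i=0$ or coinciding roots) is more careful than the paper's, which does not address them at all.
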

\begin{proof} Look at table with values of $x,y,z,w$ in $X,Y,Z,W$.

$$
\begin{pmatrix}
X=&(0,&\frac{a_1b_0-b_1a_0}{b_0},&\frac{a_2b_0-b_2a_0}{b_0},&\frac{a_3b_0-b_3a_0}{b_0})\\
Y=&(\frac{a_0b_1-a_1b_0}{b_1}, &0,&\frac{a_2b_1-a_1b_2}{b_1},&\frac{a_3b_1-a_1b_3}{b_1})\\
Z=&(\frac{a_0b_2-a_2b_0}{b_2},&\frac{a_1b_2-a_2b_1}{b_2},&0,&\frac{a_3b_2-a_2b_3}{b_2})\\
W=&(\frac{a_0b_3-a_3b_0}{b_3},&\frac{a_1b_3-a_3b_1}{b_3},&\frac{a_2b_3-a_3b_2}{b_3},&0)\\
\end{pmatrix}
$$
\end{proof}

\begin{remark}
\label{pr_weil}
Is it possible to generalize this proposition for the curves of higher
degree? 
\end{remark}

\section{Legendrian cubics}
\label{sec_cubicsproof}
Let us find all the legendrian cubics passing through three generic points in $\CC P^3$. We parametrize our curve and suppose that it passes through chosen points at $t=-1,0,1$.
This imposes constraints on the coefficients of this parametrization and we will find that the corresponding subvariety of the space of coefficients of cubics through three generic points. This subvariety happens to be of dimension one (as expected) and of degree one (it was not expected). First we do it using Macaulay2 and then by hands.

\begin{verbatim}
clearAll
--coefficients of the parametrization of the cubic
mainvar=(a0,a1,a2,a3,b0,b1,b2,b3,c0,c1,c2,c3,d0,d1,d2,d3)
R=QQ[mainvar]; P=R[s];
--polynomials for each coordinate
x=a0+a1*s+a2*s*s+a3*s*s*s; y=b0+b1*s+b2*s*s+b3*s*s*s;
z=c0+c1*s+c2*s*s+c3*s*s*s; t=d0+d1*s+d2*s*s+d3*s*s*s;

ourconditions = y*diff(s,x)-x*diff(s,y)+t*diff(s,z)-z*diff(s,t)
--in M we have our relation for variables since in in the variable ourconditions 
--(as a polynomial in z) all the coef. should be zeroes
(C,M) = coefficients ourconditions
(A,B,C)=(0,1,-1) 

xA=sub(x,{s=>A}); xB=sub(x,{s=>B}); xC=sub(x,{s=>C});

yA=sub(y,{s=>A}); yB=sub(y,{s=>B}); yC=sub(y,{s=>C});

zA=sub(z,{s=>A}); zB=sub(z,{s=>B}); zC=sub(z,{s=>C});

tA=sub(t,{s=>A}); tB=sub(t,{s=>B}); tC=sub(t,{s=>C});

--choose random points
(p11,p12,p13,p14)=(29,-6,13,11)
(p21,p22,p23,p24)=(-3,-17,7,-5)
(p31,p32,p33,p34)=(16,-5,6,23)
--conditions that our curve passes through chosen points
(i1,i2,i3)=(p14*xA-p11*tA,p14*yA-p12*tA,p14*zA-p13*tA)
(j1,j2,j3)=(p24*xB-p21*tB,p24*yB-p22*tB,p24*zB-p23*tB)
(k1,k2,k3)=(p34*xC-p31*tC,p34*yC-p32*tC,p34*zC-p33*tC)

use R; N= M_0; l=i->lift(i,R);

J  = ideal(i1,i2,i3,l(N_0),l(N_1),l(N_2),l(N_3),l(N_4))
S = minimalPrimes J
J0 = S_0; J1 = S_1; J2 = S_2;
--S_3 does not exist

di=i->dim variety i; use P;
Null = ideal(x,y,z,t) --if Null is a subset of our ideal, 
-- it means that x,y,z,t are all zeroes at some point, 
-- so we are not interested in such coefficients a0,a1, ...

di J0  --=7 
di J1  --=8 that raises our suspicions that it contains Null...
di J2  --=7

--ideal(s-A) means evaluation at A
isSubset(Null, promote(J0,P)+ideal(s-A))  --=false, 
isSubset(Null, promote(J1,P)+ideal(s-A))  --=true, eliminate from our consideration!
isSubset(Null, promote(J2,P)+ideal(s-A))  --=false

use R; S0 = minimalPrimes (J0+ideal(j1,j2,j3));
J00=S0_0; J01=S0_1;  --S0_2 do not exist

use P
isSubset(Null, promote(J00,P)+ideal(s-B))  --=false 
isSubset(Null, promote(J01,P)+ideal(s-B))  --=true, eliminate!

use R; S01 = minimalPrimes (J00+ideal(k1,k2,k3))
J000=S01_0; J001=S01_1;

use P; isSubset(Null, promote(J000,P)+ideal(s-C))  --=false 
isSubset(Null, promote(J001,P)+ideal(s-C))  --=true, eliminate!

di J000  --=1
degree J000  --=1, so it is linear!
---------
S2 = minimalPrimes (J2 + ideal(j1,j2,j3))
J20=S2_0  --S2_1 does not exist

isSubset(Null, promote(J20,P)+ideal(s-B))  --=true, eliminate!
\end{verbatim}

Any rational non-planar cubic is equivalent to $(t,t^2,t^3,1)$. We can choose a contact form $\omega_1$ such that $(t,t^2,t^3,1)$ was legendrian with respect to it. 

\begin{lemma} The cubic $(t,t^2,t^3,1)$ is legendrian with respect to
only one contact structure $\omega_1=3ydx-3xdy+wdz-zdw$.
\end{lemma}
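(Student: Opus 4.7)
The plan is to plug the explicit parametrization into the general contact form of equation \eqref{eq_contactform} and read off linear equations on its six coefficients $a,b,c,p,q,r$.

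More precisely, I would start from the general contact form
\[
\omega = (py - qz + aw)dx + (-px + rz + bw)dy + (qx - ry + cw)dz + (-ax - by - cz)dw,
\]
with non-degeneracy condition $pc+qb+ra\neq 0$, and compute its pullback to the cubic by substituting $x=t,\ y=t^2,\ z=t^3,\ w=1$ together with $dx=dt,\ dy=2t\,dt,\ dz=3t^2\,dt,\ dw=0$. The curve is legendrian iff the resulting scalar polynomial in $t$ (the coefficient of $dt$) vanishes identically.

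After collecting terms, this polynomial has degree $4$ in $t$, and setting each of the five coefficients ($1,t,t^2,t^3,t^4$) to zero produces a linear system in $(a,b,c,p,q,r)$. I expect this system to force $a=b=q=r=0$ together with the single relation $p=3c$, so that the space of compatible forms is one-dimensional, corresponding to a unique projective class. Finally, the non-degeneracy condition $pc+qb+ra = 3c^{2}$ is automatically non-zero, confirming that the solution actually defines a contact form, and normalising $c=1$ yields exactly $\omega_{1}=3y\,dx-3x\,dy+w\,dz-z\,dw$.

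There is no real obstacle here: the step that could be conceptually worrying is whether the vanishing system might be under-determined, leaving a higher-dimensional family of contact forms on the cubic; but since the cubic is non-planar (unlike the planar case treated earlier, where any form vanishing at a single point is compatible), enough coefficients survive the substitution to pin down all six parameters up to one overall scalar. The computation is therefore a direct linear-algebra check once the substitution is carried out.
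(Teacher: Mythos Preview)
Your proposal is correct and is precisely the ``direct calculation, using \eqref{eq_contactform}'' that the paper invokes without writing out; the linear system you describe indeed collapses to $a=b=q=r=0$, $p=3c$, giving the unique projective class $\omega_1$ with $pc+qb+ra=3c^2\ne0$.
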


\begin{proof}
Direct calculation, using \eqref{eq_contactform}.
\end{proof}

We fix the contact form $w_1$, then by a contactomorphism we can bring any three generic points to the points $(0,0,0,1),(1,1,1,1),(-1,1,-1,1)$. The main result of this section is the following theorem (above we have just predicted that the family of such curves is a line in the space of the coefficients).

\begin{thm}
\label{th_contact}
All the rational cubics passing through $(0,0,0,1),(1,1,1,1),(-1,1,-1,1)$ and tangent to $\omega_1=3ydx-3xdy+wdz-zdw$ are of the form 
\begin{equation}
\label{eq_contactfamily}
l(t,\mu)=(t,t^2+\mu(t-t^3),t^3, 1-3\mu(t-t^3)).
\end{equation} 
\end{thm}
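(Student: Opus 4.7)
The plan is to reduce the statement to a direct linear-algebra computation via a careful normalization of the parametrization. First, using the three-dimensional freedom of Möbius reparametrizations of $\CC P^1$, I uniquely arrange for the preimages of the three prescribed points to be $t=0,\,1,\,-1$. An overall rescaling of the four homogeneous coordinates then fixes $w(0)=1$, exhausting all projective freedom in the choice of parametrization.

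I next write each component of $l(t)$ as a cubic polynomial in $t$ with indeterminate coefficients. The projective point-passage conditions $l(0)=(0,0,0,1)$, $l(1)\propto(1,1,1,1)$, $l(-1)\propto(-1,1,-1,1)$, together with the normalization $w(0)=1$, reduce the sixteen coefficients to a six-parameter family. Introducing scalars $\alpha,\beta$ so that $l(1)=(\alpha+\beta)(1,1,1,1)$ and $l(-1)=(\alpha-\beta)(-1,1,-1,1)$, a convenient parametrization of this family is $(x_1,y_1,z_1,w_1,\alpha,\beta)$, where the first four are the linear-in-$t$ coefficients of the four components.

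Then I impose the Legendrian condition $\omega_1(l'(t)) = 3yx'-3xy'+wz'-zw'\equiv 0$. After expansion this is a polynomial in $t$ of degree at most five whose $t^5$ coefficient vanishes automatically, yielding five polynomial equations in the six unknowns. Solving them in order of ascending degree forces successively: $z_1=0$ from the constant term; $\beta=0$ from the $t$ coefficient; then $x_1=\alpha=1$ from the $t^2$ and $t^4$ coefficients (the alternative $\alpha=0$ is excluded since it collapses $l(\pm 1)$ to the origin of $\CC^4$); and finally $w_1=-3y_1$ from the $t^3$ coefficient. Setting $\mu=y_1$ and substituting back yields exactly the family \eqref{eq_contactfamily}.

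The main obstacle is conceptual rather than computational: one must resist the temptation to pre-normalize the scale factors $\alpha,\beta$ to $1$, and instead let the Legendrian condition itself select them, to be certain that no solutions outside of \eqref{eq_contactfamily} are missed. Once $\alpha,\beta$ are carried faithfully through the expansion, each step is routine polynomial algebra; the nontrivial content of the theorem is exactly the fact that the five low-degree coefficients of $\omega_1(l'(t))$ pin down all five of $z_1,\beta,x_1,\alpha,w_1$ in terms of a single residual parameter $\mu=y_1$.
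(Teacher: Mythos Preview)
Your proof is correct and follows essentially the same approach as the paper's: normalize the parametrization so the three points are hit at $t=0,\pm1$, reduce to six free coefficients, and then eliminate five of them by reading off the coefficients of $3yx'-3xy'+wz'-zw'$ in increasing powers of $t$. Your scaling parameters $\alpha,\beta$ are exactly the paper's $b_2,a_2$, and your sequence of deductions ($z_1=0$, $\beta=0$, then $x_1=\alpha=1$ or the degenerate $\alpha=0$, then $w_1=-3y_1$) matches the paper's ($c_1=0$, $a_2=0$, then $a_1=b_2=1$ or the degenerate $b_2=0$, then $d_1=-3b_1$) step for step.
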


The result of the theorem is not surprising. This is the orbit of the action of $\mathrm{Stab^3_\mu}$ (see Eq.~\eqref{eq_fixator}) on $(t,t^2,t^3,1)$. Therefore, the only problem is to show that there are no other solutions.

\begin{corollary}
\label{cor_answer}
For each holomorphic contact form on $\CC P^3$ the number of rational contact cubics through three generic points and a line in general position is equal to three. 
\end{corollary}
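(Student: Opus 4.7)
The plan is to build directly on Theorem~\ref{th_contact}, which already produces an explicit one-parameter family of legendrian rational cubics through the three normalized points $(0,0,0,1)$, $(1,1,1,1)$, $(-1,1,-1,1)$. Since the classification of holomorphic contact forms on $\CC P^3$ recalled above shows they are all $GL(4,\CC)$-equivalent, and since $\mathrm{Sp}(4,\CC)$ acts generically $3$-transitively by Lemma~\ref{lem_transitive}, the count of contact cubics through three generic points and a generic line is independent of the chosen form and of the chosen triple. Hence it suffices to count the values of $\mu\in\PP^1$ for which the curve $l(\cdot,\mu)=(t,\,t^2+\mu(t-t^3),\,t^3,\,1-3\mu(t-t^3))$ meets a given general line $L\subset\CC P^3$.

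Writing $L=\{H_1=0\}\cap\{H_2=0\}$ for two linear forms, I would consider the pullbacks $f_i(t,\mu)=H_i\bigl(l(t,\mu)\bigr)$. Each coordinate of $l$ is linear in $\mu$ and of degree at most three in $t$, so after homogenising both parameters the $f_i$ become bihomogeneous forms of bidegree $(1,3)$ on $\PP^1_\mu\times\PP^1_t$. Bezout on this smooth quadric gives length $1\cdot 3+3\cdot 1=6$ for $\{f_1=0\}\cap\{f_2=0\}$, counted with multiplicity.

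The key geometric input is the base locus of the rational map $\Phi\colon\PP^1_\mu\times\PP^1_t\dashrightarrow\CC P^3$ defined by $l$. A short inspection of the homogenised components (with the factor $\mu(t-t^3)=\mu\,t(t-1)(t+1)$ controlling what happens at $\mu=\infty$) shows that all four coordinates vanish simultaneously exactly at the three points $(\mu,t)\in\{(\infty,0),(\infty,1),(\infty,-1)\}$. Since all four components $l_j$ vanish at each of these points, every $f_i$ vanishes there regardless of $L$, contributing three spurious points to the Bezout count. The resolved images of these base points are the three marked points, which by hypothesis do not lie on $L$, so these three contributions do not come from a genuine meeting of $L$ with any cubic in the pencil. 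Resolving the three simple base points on the blow-up of $\PP^1\times\PP^1$ decreases the intersection number by exactly $3$, leaving $6-3=3$ honest intersection points.

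The main obstacle I anticipate is the verification that, for $L$ in general position, these three remaining intersection points are transverse and have pairwise distinct $\mu$-coordinates. A coincidence of two $\mu$-values would force a single cubic in the pencil to meet $L$ twice, and a higher-order zero in $t$ would force some cubic to be tangent to $L$; both are codimension-one conditions on the Grassmannian of lines and are excluded on a non-empty Zariski open set. Once transversality and distinctness are established, the three honest intersection points yield three distinct rational contact cubics through the three marked points and $L$, proving the corollary.
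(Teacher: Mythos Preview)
Your argument is correct and reaches the same number, but by a genuinely different route from the paper. The paper's proof is a direct elimination: it parametrises the generic line as $(t',p_1+q_1t',p_2+q_2t',p_3+q_3t')$, substitutes into the family $l(t,\mu)$, solves successively for the proportionality constant $c$ and for $\mu$ in terms of $t$, and ends up with a single cubic equation in $t$ whose three roots give the three cubics. (Elsewhere the paper also observes that the family sweeps out a cubic surface, so a generic line meets it in three points; your Bezout-with-base-points computation is the intersection-theoretic version of that second argument.) What your approach buys is that it avoids any explicit manipulation of the equations and makes clear that the number $3$ is $(1,3)^2-3$ on $\PP^1_\mu\times\PP^1_t$; what the paper's elimination buys is an explicit cubic in $t$ from which one can read off further information (and it requires no intersection theory).

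One small correction: the resolved images of your three base points are not the three marked points but rather three lines in $\CC P^3$ (the contact lines through $(0,0,0,1),(1,1,1,1),(-1,1,-1,1)$ described in the remark following Theorem~\ref{th_contact}). This does not damage the count, since a line in general position still misses three fixed lines in $\PP^3$, so the three base-point contributions remain spurious; but the sentence ``the resolved images of these base points are the three marked points'' should be adjusted. You should also note that the three fibres $\{t=0\},\{t=1\},\{t=-1\}$ are contracted by $\Phi$ to the marked points, which is the companion statement and explains why, for $L$ avoiding those points, the remaining three intersections indeed have distinct $\mu$-coordinates generically.
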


\begin{proof}
We intersect the family \eqref{eq_contactfamily} with a generic line $L$ of the type $(t', p_1 + q_1 t', p_2 +
q_2t', p_3+q_3t')$. Because of the genericity, $L$ does not pass through  $(0,0,0,1)=l(0,\mu)$, therefore we may suppose that at any intersection of $L$ and $l(t,\mu)$ we have $t\ne 0$. Therefore, at a point of intersection we have $t'=ct$ for some $c$, and then 
\begin{equation}
\label{eq_lineintersection}
p_1 + q_1 t'=c(t^2+\mu(t-t^3)),p_2 +
q_2t'=ct^3,p_3+q_3t'=c(1-3\mu(t-t^3)).
\end{equation}

We have $3(p_1+q_1t')+p_3+q_3t'=c(3t^2+1)$, therefore, substituting $t'=ct$ we obtain $$c=\frac{3p_1+p_3}{3t^2-3q_1t-q_3t+1}.$$ Then, using the first equality in \eqref{eq_lineintersection}, we get $\mu = \frac{p_1+q_1ct-ct^2}{c(t-t^3)}$. Then, since $c(t^3-q_2t)=p_2$, we have $$t^3-q_2t=\frac{p_2}{3p_1+p_3}(3t^2-3q_1t-q_3t+1).$$ Choosing $p_2,q_2$ appropriately, we see that the last equation usually has three roots.
\end{proof}

\begin{corollary}
For the contact form $\omega =ydx-xdy+wdz-zdw$, the parametrization of the family of legendrian rational cubics through the points $(0,0,0,1),(1,1,1,1),(-1,1,-1,1)$ is 
\begin{equation}
\label{eq_allcubics}
(3t-t^3, 2t^2+2\mu(t-t^3), 2t^3,1+t^2-2\mu(t-t^3)).
\end{equation}
The surface swept by all these cubics is given by $F=0$ where $$F(x,y,z,w)=2x^3+21x^2z-27y^2z-54yzw-27zw^2+60xz^2+25z^3.$$ Such a surface intersects a generic line in three points, this gives another proof of Corollary~\ref{cor_answer}.

\end{corollary}

\section{Proof of Theorem~\ref{th_contact}}

Each rational cubic curve has a parametrization of the form 
\begin{align*}
(a_0+a_1t+a_2t^2+a_3t^3,b_0+b_1t+b_2t^2+b_3t^3,c_0+c_1t+c_2t^2+c_3t^3,d_0+d_1t+d_2t^2+d_3t^3).
\end{align*}
We supposed that our cubic passes through points $(0,0,0,1),(1,1,1,1),(-1,1,-1,1)$ at $t=0,1,-1$ respectively. Substituting $t=0$ in the parametrization, we obtain $a_0=b_0=c_0=0,d_0=1$. Substitutions $t=\pm 1$ give us  

\begin{align*}
a_1+a_2+a_3&=b_1+b_2+b_3=c_1+c_2+c_3=1+d_1+d_2+d_3, \\
a_1-a_2+a_3&=-b_1+b_2-b_3=c_1-c_2+c_3=1-d_1+d_2-d_3.
\end{align*}

Therefore, $a_2=b_1+b_3=c_2=d_1+d_3, a_1+a_3=b_2=c_1+c_3=1+d_2$.

Substituting indeterminates with bigger indices as functions of the indeterminates with
smaller indices we obtain that our curve is parametrized by
\begin{align*}
(a_1t+a_2t^2+&(b_2-a_1)t^3,b_1t+b_2t^2+(a_2-b_1)t^3,\\
&c_1t+a_2t^2+(b_2-c_1)t^3,1+d_1t+(b_2-1)t^2+(a_2-d_1)t^3).
\end{align*}

Evaluating the form $3ydx-3xdy+wdz-zdw$ on the curve we obtain

\begin{align*}
3(b_1t+b_2t^2+&(a_2-b_1)t^3)(a_1+2a_2t+3(b_2-a_1)t^2)\\
&-3(a_1t+a_2t^2+(b_2-a_1)t^3)(b_1+2b_2t+3(a_2-b_1)t^2)\\
&+(1+d_1t+(b_2-1)t^2+(a_2-d_1)t^3)(c_1+2a_2t+3(b_2-c_1)t^2)\\
&-(c_1t+a_2t^2+(b_2-c_1)t^3)(d_1+2(b_2-1)t+3(a_2-d_1)t^2) =0.
\end{align*}

The coefficient before $t^0$ should be equal to $0$, so $c_1=0$. The parametrization rewrites as
\begin{align*}
3(b_1t&+b_2t^2+(a_2-b_1)t^3)(a_1+2a_2t+3(b_2-a_1)t^2)-3(a_1t+a_2t^2+(b_2-a_1)t^3)(b_1+2b_2t+3(a_2-b_1)t^2)\\
&+(1+d_1t+(b_2-1)t^2+(a_2-d_1)t^3)(2a_2t+3b_2t^2) -(a_2t^2+b_2t^3)(d_1+2(b_2-1)t+3(a_2-d_1)t^2) = 0.
\end{align*}

Coefficient before $t^1$ equals $2a_2$, so $a_2=0$.

\begin{align*}
3(b_1t+b_2t^2-&b_1t^3)(a_1+3(b_2-a_1)t^2)-3(a_1t+(b_2-a_1)t^3)(b_1+2b_2t-3b_1t^2)\\
&+(1+d_1t+(b_2-1)t^2-d_1t^3)(3b_2t^2) -(b_2t^3)(d_1+2(b_2-1)t-3d_1t^2) = \\
3(b_1t+b_2t^2-&b_1t^3)(a_1+3(b_2-a_1)t^2)-3(a_1t+(b_2-a_1)t^3)(b_1+2b_2t-3b_1t^2)\\
&+b_2t^2(3+3d_1t+3(b_2-1)t^2-3d_1t^3 -d_1t-2(b_2-1)t^2+3d_1t^3)=\\
3(b_1t-b_1t^3)(a_1+3&(b_2-a_1)t^2)-3(a_1t+(b_2-a_1)t^3)(b_1-3b_1t^2)\\
&+b_2t^2(3(b_2-a_1)t^2-3a_1+3+2d_1t+(b_2-1)t^2) =\\
b_1t^3(-3a_1-&9(b_2-a_1)t^2+9(b_2-a_1)+9a_1-3(b_2-a_1)+9(b_2-a_1)t^2)+b_1t(3a_1-3a_1)\\
&+b_2t^2(3(b_2-a_1)t^2-3a_1+3+2d_1t+(b_2-1)t^2)=\\
6b_1b_2&t^3+b_2t^2(3(b_2-a_1)t^2-3a_1+3+2d_1t+(b_2-1)t^2)=\\
b_2t^2(6b_1t&+3(b_2-a_1)t^2-3a_1+3+2d_1t+(b_2-1)t^2) =\\
b_2t^2(t(6&b_1+2d_1)+t^2(4b_2-3a_1-1)-3a_1+3) =0\\
\end{align*}

Therefore, either $b_2=0$ or $a_1=1, b_2=1,d_1=-3b_1$.

In the first case the curve is going to be as follows:
$$(a_1t-a_1t^3,b_1t-b_1t^3,0,1+d_1t-t^2-d_1t^3) = (a_1t,b_1t,0,1+d_1t).$$

what is not really a cubic, but in the second case we have

$$(t,b_1t+t^2-b_1t^3,t^3,1-3b_1t+3b_1t^3) = (t,t^2+\mu(t-t^3),t^3, 1-3\mu(t-t^3)).$$

As it was predicted by Macaulay2, we have obtained a linear family of cubics.

\begin{remark}
One can look at what happens in the limiting case $\mu=\infty$. The
family of curves converges (if we look at the parametrizations) to a point $(0,-1/3,0)$. On the other
hand their tangent vectors at $t=0,1,-1$ converge to
$(0,1,0),(-3,-4,-3),(3,-4,3)$ respectively. Then, contact lines from
$(0,0,0,1),(1,1,1,1),(-1,1,-1,1)$ with these tangent vectors all intersect
in $(0,-1/3,0)$. So, the family $l(t,\mu)$ converges to these three lines as $\mu\to\infty$, these three lines with the embedded point $(0,-1/3,0)$ is a point on the boundary of the Hilbert scheme of rational cubics in $\CC P^3$ (see \cite{cubicsComp} for more details about the compactification of the space of rational cubics).
\end{remark}

\begin{remark}
\label{pr_legendrian}
Is it true for higher degrees? A hypothesis: there always exist at least
$d$ legendrian rational curves of degree $d$ passing through $d$ generic points and a line. An heuristic argument is as follows. We take the one dimensional family (because $\mathrm{Stab^3_\mu}$ acts on these curves) of the degree $d$ legendrian curves through $d$ points which all belong to a given plane $L$, and write the equation of the surface that they sweep. Then we intersect this surface with $L$. We obtain a collection of $d$ lines in the intersection, therefore the degree of the surface is at least $d$, therefore there is al least $d$ legendrian curves through $d$ generic points and one generic line. Also, this approach by perturbation of degenerate families might work for any genus, as long as the set of the curves is not empty.
\end{remark}


%
%
%
%
%

\bibliographystyle{plain}

\bibliography{../../bibliography}

\end{document}